\newcommand{\ex}{\text{ex}}
\newtheorem{thm}{Theorem}[section]
\newtheorem{lemma}[thm]{Lemma}
\theoremstyle{definition}
\newtheorem{defn}[thm]{Definition}
\numberwithin{equation}{section}
\numberwithin{figure}{section}
\title{Exact generalized Tur\'an number for $K_3$ versus suspension of $P_4$}
\author{
	Sayan Mukherjee\footnote{Blueqat Research, Tokyo, Japan. Email:{\tt sayan@blueqat.com}}
	\footnote{Department of Physics, The University of Tokyo, Japan.}
}
\begin{document}
	\maketitle
 \begin{abstract}
	\noindent
	Let $P_4$ denote the path graph on $4$ vertices.
	The suspension of $P_4$, denoted by $\widehat P_4$, is the graph obtained via adding an extra vertex and joining it to all four vertices of $P_4$.
	In this note, we demonstrate that for $n\ge 8$, the maximum number of triangles in any $n$-vertex graph not containing $\widehat P_4$ is $\left\lfloor n^2/8\right\rfloor$.
	Our method uses simple induction along with computer programming to prove a base case of the induction hypothesis.
	\\[2mm]
	{\bf Keywords:} generalized Tur\'an problem, suspension of a graph, computer programming.\\[2mm]
	{\bf 2020 Mathematics Subject Classification:} 05C35.
\end{abstract}
	
	\section{Introduction}
	
	The generalized Tur\'an number $\ex(n, T, H)$ is defined as the maximum number of copies of $T$ in an $n$-vertex graph not containing $H$ as a (not necessarily induced) subgraph.
	When $T=K_2$, this is the Tur\'an number $\ex(n,H)$ of the graph.
	The first systematic study of $\ex(n, T, H)$ for $T\neq K_2$ was carried out by Alon and Shikhelman~\cite{manyTcopies-alon2016}.
	
	In more recent years, several researchers have studied the asymptotic behavior of $\ex(n, K_3, H)$ for the case $T=K_3$ (see, for example \cite{trianglesC5Free-hungary2019,trianglesC5Free-hungary2018,trianglesOddCycleFree-hungary2012}).
	It is known that when $\chi(H)>3$, $\ex(n,K_3,H)\sim \binom{\chi(H)-1}{3}/{(\chi(H)-1)^2} \cdot n^2$, where $\chi(H)$ denotes the chromatic number of $H$~\cite{manyTcopies-alon2016,sharpResults-ma2020}.
	Alon and Shikhelman~\cite{manyTcopies-alon2016} extensively study the case when $\chi(H)=2$.
	
	Mubayi and the author~\cite{suspensionFree2023} initiated the study of $\ex(n, K_3, H)$ for a simple family of graphs $H$ with $\chi(H)=3$.
	For any graph $G$, they denoted the suspension $\widehat G$ as the graph obtained from $G$ by adding a new vertex $v$ and joining it with all vertices of $G$.
	They proceeded to analyze the asymptotic behavior of $\ex(n,K_3,\widehat G)$ for different bipartite graphs $G$.
	
	One of the several bipartite graphs they consider is the path $P_4$ on four vertices. It was shown that for any $n\ge 4$,
	\begin{equation}
		\frac{n^2}{8}-O(1)\le \ex(n, K_3, \widehat P_4) < \frac{n^2}{8}+3n.
	\end{equation}
	
	An exact result for sufficiently large $n$ was given by Gerbner~\cite{hatP3Free-gerbner2022} using the technique of progressive induction. In particular, they prove that for a number $K\le 1575$ and $n\ge 525+4K$,
	\begin{equation}
		\label{eq:maineq}
		\ex(n,K_3,\widehat P_4) = \left\lfloor n^2/8\right\rfloor.
	\end{equation}

	They mention that a proof of the upper bound of (\ref{eq:maineq}) for $n=8,9,10,11$ together with induction would suffice to prove (\ref{eq:maineq}) for every $n\ge 8$.	
	In this note, we leverage this idea to determine the exact value of $\ex(n, K_3, \widehat P_4)$ for \emph{every} $n\ge 4$, thus closing the gap in the literature for this extremal problem.
    
	\begin{thm}
		\label{thm:mainthm}
		For $n\ge 8$, $\ex(n, K_3, \widehat P_4) = \left\lfloor n^2/8\right\rfloor$. For $n=4,5,6,7$ the values of $\ex(n, K_3,\widehat P_4)$ are $4,4,5,8$ respectively.
	\end{thm}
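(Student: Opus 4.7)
The strategy splits into three parts: handling the sporadic small values $n \in \{4,5,6,7\}$, verifying a finite block of base cases $n \in \{8,9,10,11\}$ by computer, and running an inductive step for all $n \ge 12$.

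For $n \in \{4,5,6,7\}$, I would exhibit explicit $\widehat P_4$-free constructions achieving the claimed values: $K_4$ for $n=4$; $K_4 \cup K_1$ for $n=5$; a copy of $K_4$ and a copy of $K_3$ glued at a single vertex for $n=6$ (yielding $5$ triangles); and two copies of $K_4$ glued at a single vertex for $n=7$ (yielding $8$ triangles). The matching upper bounds are then verified by a direct brute-force search through all graphs on at most seven vertices.

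For $n \in \{8,9,10,11\}$, I would write a program to confirm that no $\widehat P_4$-free graph on $n$ vertices contains more than $\lfloor n^2/8 \rfloor \in \{8,10,12,15\}$ triangles. Direct enumeration of labeled graphs is infeasible for $n=11$, so I would rely on canonical-form generation via a tool such as \emph{nauty}: graphs are built edge by edge, each partial graph that already contains a copy of $\widehat P_4$ is pruned, together with branches whose residual potential cannot exceed the current best triangle count.

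For $n \ge 12$, the plan is to induct in steps of four, anchored at the base cases above. The key lemma to establish is: every $\widehat P_4$-free graph $G$ on $n$ vertices admits a set $V' \subseteq V(G)$ of four vertices meeting at most $n - 2$ triangles of $G$. Granted the lemma, applying the inductive hypothesis to $G \setminus V'$ (still $\widehat P_4$-free) yields
\[
t(G) \le t(G \setminus V') + (n-2) \le \left\lfloor \frac{(n-4)^2}{8} \right\rfloor + (n-2) = \left\lfloor \frac{n^2}{8} \right\rfloor.
\]
The lemma is sharp: in the extremal construction $K_{n/2,n/2}$ with a perfect matching added on one side, a matched pair together with any two vertices of the opposite part cover exactly $n-2$ triangles. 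To prove it, I would exploit the fact that $\widehat P_4$-freeness forces $G[N(v)]$ to be $P_4$-subgraph-free for every $v$, hence a disjoint union of stars, triangles and isolated vertices; this gives the crisp bound $t(v) \le \deg(v)$ and substantially constrains how neighborhoods can overlap.

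The main obstacle is proving the key lemma when $G$ has large minimum triangle-degree. A naive averaging argument provides only a vertex of triangle-degree at most $3t(G)/n \approx 3n/8$, which is too large to fit the per-vertex budget $(n-2)/4$ that a uniform greedy selection of four vertices would require. Resolving this likely demands a case analysis on the structure of $G[N(v)]$ — whether it is dominated by a single large star or by many disjoint triangles — together with careful bookkeeping of triangles shared between the four chosen vertices. A secondary obstacle is engineering the computer search at $n=10, 11$ to terminate in reasonable time, which will probably require both symmetry reduction and aggressive pruning based on partial triangle counts.
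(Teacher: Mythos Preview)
Your key lemma---that every $\widehat P_4$-free graph on $n\ge 12$ vertices contains four vertices meeting at most $n-2$ triangles---is false as stated. A counterexample at $n=16$ appears in Section~\ref{sec:concl}: eight edge-disjoint copies of $K_4$ are arranged on $16$ vertices so that every vertex lies in exactly two of them (hence in exactly six triangles). This graph is $\widehat P_4$-free with $32=\lfloor 16^2/8\rfloor$ triangles, yet one can check that every four-vertex set meets at least $16>14=n-2$ triangles, the minimum being attained when the four vertices span one of the $K_4$-blocks. So the obstacle you flag is not merely technical: no case analysis on $G[N(v)]$ can salvage the unconditional statement.

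The paper sidesteps this by working conditionally and by supplying an ingredient you do not mention: the $K_4$ dichotomy. If $G$ is $\widehat P_4$-free and contains no $K_4$, every triangle block is a book $B_s$; deleting one base edge per book leaves a triangle-free graph with $2t(G)$ edges, and Mantel gives $t(G)\le\lfloor n^2/8\rfloor$ outright (Lemma~\ref{lem:mustcontainK4}). Thus any putative counterexample with $t(G)>\lfloor n^2/8\rfloor$ must contain a $K_4$, and one takes $V'$ to be \emph{that} $K_4$. Its four outside-neighbourhoods $X_i=N(u_i)\setminus S$ are pairwise disjoint and each $P_4$-free, so $e(X_i)\le|X_i|$ and $t(S)=4+\sum_i e(X_i)\le n$; a degree-sum computation sharpened by Cauchy--Schwarz on $\sum_i x_i^2$ then forces $t(G)\le n(n+8)/12$, contradicting $t(G)>\lfloor n^2/8\rfloor$ once $n\ge 15$, with the small residue handled directly (Section~\ref{sec:inductionStep}). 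A side benefit is that only $n\le 8$ needs brute force: the cases $n\in\{9,10,11\}$ already fall to the same $K_4$-and-delete reduction (Theorem~\ref{thm:9-11-theorem}), which removes your second obstacle as well.
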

	
	The lower bound constructions for Theorem~\ref{thm:mainthm} are different for the cases $n\in\{4,5,6,7\}$ and $n\ge 8$.
	
	Figure~\ref{fig:smallExamples} illustrates graphs on $n$ vertices for $n\in\{4,5,6,7\}$ that achieve the maximum number of triangles.
	In fact, we shall see later in Section~\ref{subsec:brutalforce} that these constructions are unique up to isomorphism.
	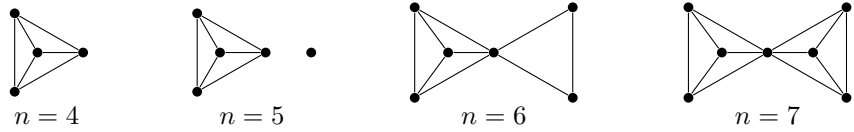
\begin{figure}[ht]
		\centering
		\begin{tikzpicture}[scale = 0.6]
			\tikzstyle{vertex}=[circle,fill=black,minimum size=2pt,inner sep=1.3pt]
			
			\begin{scope}[shift={(0,0)}]
				\node[vertex] (a) at (0:1) {};
				\node[vertex] (b) at (120:1) {};
				\node[vertex] (c) at (240:1) {};
				\node[vertex] (d) at (0,0) {};
				\draw (a)--(b)--(c)--(d)--(a)--(c);
				\draw (d)--(b);
				\draw (0.2,-1) node [below] {$n=4$};
			\end{scope}
			
			\begin{scope}[shift={(4,0)}]
				\node[vertex] (a) at (0:1) {};
				\node[vertex] (b) at (120:1) {};
				\node[vertex] (c) at (240:1) {};
				\node[vertex] (d) at (0,0) {};
				\node[vertex] (e) at (0:2) {};
				\draw (a)--(b)--(c)--(d)--(a)--(c);
				\draw (d)--(b);
				\draw (0.7,-1) node [below] {$n=5$};
			\end{scope}
			
			\begin{scope}[shift={(10,0)}]
				\node[vertex] (a) at (0:0) {};
				\node[vertex] (b) at (150:2) {};
				\node[vertex] (c) at (210:2) {};
				\node[vertex] (d) at (-1,0) {};
				\node[vertex] (e) at (30:2) {};
				\node[vertex] (f) at (-30:2) {};
				\draw (a)--(e)--(f)--(a)--(b)--(c)--(d)--(a)--(c);
				\draw (d)--(b);
				\draw (0,-1) node [below] {$n=6$};
			\end{scope}
			
			\begin{scope}[shift={(16,0)}]
				\node[vertex] (a) at (0:0) {};
				\node[vertex] (b) at (150:2) {};
				\node[vertex] (c) at (210:2) {};
				\node[vertex] (d) at (-1,0) {};
				\node[vertex] (e) at (30:2) {};
				\node[vertex] (f) at (-30:2) {};
				\node[vertex] (g) at (1,0) {};
				\draw (a)--(e)--(f)--(a)--(b)--(c)--(d)--(a)--(c);
				\draw (d)--(b);
				\draw (g)--(e);
				\draw (g)--(a);
				\draw (g)--(f);
				\draw (0,-1) node [below] {$n=7$};
			\end{scope}
			
		\end{tikzpicture}
		\caption{\label{fig:smallExamples}Graphs on $4,5,6,7$ vertices and $4,4,5,8$ triangles, respectively.}
	\end{figure}

	The general lower bound construction considered in \cite{hatP3Free-gerbner2022,suspensionFree2023} (for $n\ge 8$) was the complete bipartite graph $K_{\lfloor n/2\rfloor, \lceil n/2\rceil}$ with a matching in any of the even parts.
	A short case analysis shows that the total number of triangles in these graphs is given by $\left\lfloor n^2/8\right\rfloor$, hence proving the lower bound in Theorem~\ref{thm:mainthm} for general $n$.
	
	Thus, the main goal of this manuscript is to prove that these lower bounds on $\ex(n,K_3,\widehat P_4)$ are tight.
	
	This work is organized as follows.
	We present some preliminaries in Section~\ref{sec:prelim}.
	Then, we show the upper bound of Theorem~\ref{thm:mainthm} for $n\ge 5$ in Section~\ref{sec:upperbd}.
	Finally, we make some concluding remarks regarding uniqueness of the lower bound constructions in Section~\ref{sec:concl}.
	
	%==========================================
	% Preliminaries
	%==========================================
	
	\section{Preliminaries}\label{sec:prelim}
	
	Throughout the rest of this paper, we assume without loss of generality that all graphs are \emph{edge-minimal}.
	This implies that every edge of the graphs considered must lie in a triangle, as we can simply delete edges that do not help forming a triangle.
	We also assume that the vertex set of any $n$-vertex graph in the rest of this section is $\{0,\ldots,n-1\}$, and abuse notation to represent a $K_3$ on vertex subset $\{a,b,c\}$ as simply $abc$.
	
	Let $n(G)$, $e(G)$ and $t(G)$ denote the number of vertices, edges and triangles in $G$, respectively.
	
	Now we recall some definitions and state a two important lemmas from \cite{hatP3Free-gerbner2022} and \cite{suspensionFree2023} which are instrumental in our proof.
	
	\begin{defn}[Triangle-connectivity]
		For a graph $G$, two edges $e$ and $e'$ are said to be \emph{triangle-connected} if there is a sequence of triangles $\{T_1,\ldots,T_k\}$ of $G$ such that $e\in T_1$, $e'\in T_k$, and $T_i$ and $T_{i+1}$ share a common edge for every $1\le i \le k-1$.
		A subgraph $H\subseteq G$ is triangle-connected if $e$ and $e'$ are triangle-connected for every edges $e$ and $e'$ of $H$.
	\end{defn}
	
	\begin{defn}[Triangle block]
		A subgraph $H\subseteq G$ is a \emph{triangle block} (or simply a \emph{block}) if it is edge-maximally triangle-connected.
	\end{defn}
	
	By definition, the triangle blocks of any graph $G$ are edge-disjoint.
	
	Let $B_s$ denote the \emph{book graph} on $(s+2)$ vertices, consisting of $s$ triangles all sharing a common edge. Let this common edge be called the \emph{base} of the $B_s$. The following lemma characterizes the triangle blocks of any $\widehat P_4$-free graph $G$.
	
	\begin{lemma}[\cite{suspensionFree2023}, Claim 5.3]
		\label{lem:block-characterization}
		Every triangle block of a $\widehat P_4$-free graph $G$ is isomorphic to a $K_4$ or a $B_s$ for some $s\ge 1$.
	\end{lemma}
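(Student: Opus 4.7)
The plan is to analyze how triangles can attach inside a block $B$ without creating a $\widehat{P}_4$ in $G$. The key local observation I would use throughout is: if a vertex $v$ has four neighbors whose induced subgraph in $G$ contains a $P_4$, then $v$ together with that $P_4$ forms $\widehat{P}_4 \subseteq G$, contradicting the hypothesis. The strategy is to build up $B$ one triangle at a time, using triangle-connectivity to order the triangles as $T_1, T_2, \ldots, T_m$ so that each $T_i$ ($i \geq 2$) shares an edge with some earlier $T_j$, and to show inductively that the partial block is always either a book $B_k$ or $K_4$.

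For the base case, if $m = 1$ then $B = K_3 = B_1$. Otherwise set $T_1 = abc$ and $T_2 = abd$ sharing the edge $ab$ and examine $T_3$. By the symmetry of $T_1 \cup T_2$ the shared edge may be taken to be either $ab$ or $ac$. Sharing $ab$ gives $T_3 = abe$ and extends the book to $B_3$. Sharing $ac$ gives $T_3 = acx$: if $x = d$ then all six edges of a $K_4$ on $\{a,b,c,d\}$ are present, while if $x \notin \{b,d\}$ the set $\{b,c,d,x\} \subseteq N_G(a)$ contains the path $d$-$b$-$c$-$x$ (edges $bd, bc, cx$ coming from $T_2, T_1, T_3$), forcing the forbidden $\widehat{P}_4$. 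Thus after three triangles the partial block is either $K_4$ on $\{a,b,c,d\}$ or $B_3$ with base $ab$.

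For the inductive step I would verify closure in each branch by the same local argument. If the block contains $K_4$ on $\{a,b,c,d\}$ and an additional triangle $T$ is edge-adjacent to it, then by the symmetry of $K_4$ we may take $T = abe$ with $e \notin \{c,d\}$; the path $c$-$d$-$b$-$e$ lies in $N_G(a)$, forcing $\widehat{P}_4$. Hence $B = K_4$. If the block is a book $B_k$ with base $ab$ and pages $c_1, \ldots, c_k$, $k \geq 3$, and $T$ is an additional triangle edge-adjacent to some page $abc_j$ but not sharing $ab$, then $T$ either introduces a fresh vertex $e$ (yielding the $P_4$ $c_l$-$b$-$c_j$-$e$ in $N_G(a)$ for any $l \neq j$) or closes up with an existing page $c_l$ to produce $K_4$ on $\{a,b,c_j,c_l\}$ (whereupon a further page $c_m$, which exists because $k \geq 3$, gives the $P_4$ $c_m$-$b$-$c_j$-$c_l$ in $N_G(a)$). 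Either way $\widehat{P}_4 \subseteq G$, contradiction, so $T$ must extend the book.

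The main obstacle is the bookkeeping in this inductive closure: one must enumerate every edge through which a new triangle can attach (the six edges of $K_4$, or the three edges of a book page), verify that the symmetric cases reduce by relabeling to those handled above, and confirm in each case that the $P_4$ exhibited uses only edges of $G$ coming from triangles already identified. None of these checks is difficult on its own, but together they constitute the bulk of the work; once they are done the dichotomy $B \in \{K_4\} \cup \{B_s : s \geq 1\}$ follows.
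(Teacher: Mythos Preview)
Your proof is correct and follows essentially the same approach as the paper: both arguments hinge on the observation that a $P_4$ inside $N_G(v)$ yields a forbidden $\widehat P_4$, and both carry out the same case analysis on how an additional triangle can attach to a $B_2$ (either extending the book along the base or closing up to a $K_4$, with any other attachment producing the forbidden $P_4$ in a neighborhood). The paper's write-up is somewhat terser---it argues directly that any triangle not on the base $ab$ forces a $K_4$ and asserts that a $K_4$ cannot be extended---whereas you frame the same checks as an explicit induction on an edge-sharing ordering of the triangles; the content is the same.
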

	\begin{proof}
		Let $H\subseteq G$ be an arbitrary triangle block.
		If $H$ contains only one or two triangles, it is isomorphic to $B_1$ or $B_2$.
		Suppose $H$ contains at least three triangles.
		Let two of them be $abx_1$ and $abx_2$ (see Figure~\ref{fig:blockp3}).
		
		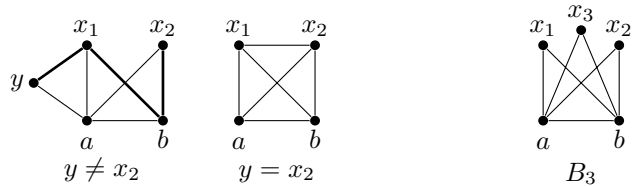
\begin{figure}[ht]
			\centering
			\begin{tikzpicture}
				\tikzstyle{vertex}=[circle,fill=black,minimum size=2pt,inner sep=1.3pt]
				\node[vertex] (a) at (0,0){};
				\node[vertex] (b) at (1,0){};
				\node[vertex] (x1) at (0,1){};
				\node[vertex] (x2) at (1,1){};
				\node[vertex] (y) at (-0.7,0.5){};
				\draw (a)--(x1)--(b)--(x2)--(a)--(b);
				\draw [line width = 1pt] (y)--(x1)--(b)--(x2);
				\draw (x1)--(y)--(a);
				\draw (0,-0.1) node [below] {$a$};
				\draw (b) node [below] {$b$};
				\draw (x1) node [above] {$x_1$};
				\draw (x2) node [above] {$x_2$};
				\draw (y) node [left] {$y$};
				\draw (0.2,-0.65)  node {$y\neq x_2$};
				
				\node[vertex] (ap) at (2,0){};
				\node[vertex] (bp) at (3,0){};
				\node[vertex] (x1p) at (2,1){};
				\node[vertex] (x2p) at (3,1){};
				\draw (ap)--(x1p)--(bp)--(x2p)--(ap)--(bp);
				\draw (x1p)--(x2p);
				\draw (2,-0.1) node [below] {$a$};
				\draw (bp) node [below] {$b$};
				\draw (x1p) node [above] {$x_1$};
				\draw (x2p) node [above] {$x_2$};
				\draw (2.5,-0.7)  node {$y=x_2$};
				
				\node[vertex] (app) at (6,0){};
				\node[vertex] (bpp) at (7,0){};
				\node[vertex] (xpp) at (6,1){};
				\node[vertex] (ypp) at (7,1){};
				\node[vertex] (zpp) at (6.5,1.2){};
				\draw (app)--(bpp)--(xpp)--(app)--(ypp)--(bpp)--(zpp)--(app);
				\draw (6,-0.1) node [below] {$a$};
				\draw (bpp) node [below] {$b$};
				\draw (xpp) node [above] {$x_1$};
				\draw (ypp) node [above] {$x_2$};
				\draw (zpp) node [above] {$x_3$};
				\draw (6.5,-0.7)  node {$B_3$};
			\end{tikzpicture}
			\caption{(left): third triangle on $ax_1$, (right): third triangle on $ab$\label{fig:blockp3}}
		\end{figure}
		
		If another triangle is of the form $ax_1y$ for some $y\in V(H)$, then there are two possible cases. If $y\neq x_2$, then $N_H(a)$ contains the 4-path $x_2bx_1y$, a contradiction. Otherwise if $y=x_2$, then the vertices $a,b,x_1,x_2$ create a $K_4$, and this $K_4$ is a triangle block by itself.
		
		Similarly, if a triangle contained any of the edges $bx_1, ax_2, bx_2$, we would end up with a $K_4$-block, and this block cannot be extended any further.
			
		Therefore all triangles in $H$ would intersect the edge $ab$, implying $H\cong B_s$ for some $s\ge 1$.	
	\end{proof}

	\begin{lemma}[\cite{hatP3Free-gerbner2022}, Section 2]
		\label{lem:mustcontainK4}
		Suppose $G$ is an $n$-vertex $\widehat P_4$-free graph containing no $K_4$. Then, we have
		$t(G)\le \lfloor n^2/8\rfloor$.
	\end{lemma}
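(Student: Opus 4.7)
The plan is to leverage the block characterization from Lemma~\ref{lem:block-characterization}: since $G$ is $\widehat P_4$-free and contains no $K_4$, every triangle block of $G$ must be a book $B_s$ for some $s\ge 1$. Writing these blocks as $B_{s_1},\ldots,B_{s_k}$, I obtain the bookkeeping identities $t(G)=\sum_i s_i$ and, by edge-disjointness of the blocks, $e(G)=\sum_i(2s_i+1)=2t(G)+k$.

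The second step is structural: for every $v\in V(G)$, the $K_4$-free hypothesis forces $G[N(v)]$ to be triangle-free, and the $\widehat P_4$-free hypothesis forces $G[N(v)]$ to be $P_4$-free. A graph that is both triangle-free and $P_4$-free is a vertex-disjoint union of stars $K_{1,m}$ with $m\ge 1$, and each such star component matches a block of $G$ through $v$: base membership of $v$ in a $B_s$ gives a $K_{1,s}$ centred at the partner, and outer membership gives a $K_2$. In particular, $t(v)=e(G[N(v)])=\deg(v)-c(v)$ where $c(v)$ is the number of blocks containing $v$.

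The bound on $t(G)$ then follows from a case split on $\Delta:=\Delta(G)$. The easy regime is $\Delta\le\lfloor n/2\rfloor$: here $2e(G)=\sum_v\deg(v)\le n\Delta$, so
\[
  t(G)\;=\;\tfrac{1}{2}(e(G)-k)\;\le\;\tfrac{1}{2}e(G)\;\le\;\tfrac{n\Delta}{4}\;\le\;\tfrac{n^2}{8},
\]
and integrality of $t(G)$ upgrades this to $t(G)\le\lfloor n^2/8\rfloor$. The remaining regime $\Delta\ge\lfloor n/2\rfloor+1$ is tight: it is realised already by a single large book $B_s$ or by the extremal construction $K_{n/2,n/2}$ plus a perfect matching when $n\equiv 0\pmod 4$. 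Here I would pick a vertex $v^{\ast}$ of maximum degree, use the rigid star decomposition of $G[N(v^{\ast})]$ to locate a dominant block through $v^{\ast}$, and combine this with induction on $n$.

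The hardest step, I expect, will be this high-degree regime. A naive ``delete $v^{\ast}$ and induct'' loses up to $\Delta-1\approx n/2$ triangles, whereas the one-step budget $\lfloor n^2/8\rfloor-\lfloor(n-1)^2/8\rfloor$ is only $\approx n/4$. The plan to get around this is a stability-type argument: show that whenever $\Delta\ge\lfloor n/2\rfloor+1$ the graph is essentially rigid, with $v^{\ast}$ forced to be a base vertex of one large book $B_s$ whose partner $u^{\ast}$ is also high-degree and whose outer set covers almost all of $N(v^{\ast})$. Deleting \emph{both} $u^{\ast}$ and $v^{\ast}$ and inducting on $n-2$ then fits inside the two-step budget $\lfloor n^2/8\rfloor-\lfloor(n-2)^2/8\rfloor\approx (n-1)/2$, since the triangles through $u^{\ast}$ or $v^{\ast}$ total $t(u^{\ast})+t(v^{\ast})-s\approx s$ once the rigidity is in hand. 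Establishing the rigidity cleanly, and handling the small residual cases ($n\le 7$) by direct inspection, should complete the proof.
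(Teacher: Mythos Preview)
Your low-degree case ($\Delta\le\lfloor n/2\rfloor$) is fine, but the high-degree case is not a proof: you only outline a ``stability-type argument'' and an induction on $n-2$, without verifying that the rigidity you need actually holds. A vertex $v^\ast$ of degree $\Delta>\lfloor n/2\rfloor$ can perfectly well sit as a base vertex of one medium-sized book and simultaneously as an outer vertex of several others, so the claim that ``$t(u^\ast)+t(v^\ast)-s\approx s$'' is unsupported. As written, the lemma is not established for $\Delta\ge\lfloor n/2\rfloor+1$, and the induction plan has no base case worked out either.

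More importantly, you are working much harder than necessary and missing the one-line finish that your own bookkeeping already sets up. You derived $e(G)-k=2t(G)$, where $k$ is the number of blocks. Interpreted structurally, $e(G)-k$ is exactly the number of edges of the graph $G'$ obtained from $G$ by deleting the base edge of every book $B_{s_i}$. Now observe that $G'$ is \emph{triangle-free}: every triangle of $G$ lies in a unique block, each block is a book, and every triangle of a book contains the base edge, which has been removed. Hence Mantel's theorem gives $e(G')\le\lfloor n^2/4\rfloor$, and therefore
\[
t(G)=\tfrac12\,e(G')\le\tfrac12\,\lfloor n^2/4\rfloor\le\lfloor n^2/8\rfloor.
\]
This is the paper's proof; it needs no case split on $\Delta$, no induction, and no stability analysis.
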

	\begin{proof}
		By Lemma~\ref{lem:block-characterization}, all triangle blocks of $G$ are isomorphic to $B_s$ for some $s\ge 1$.
		Let $G'$ be obtained from $G$ by deleting the base edges of each of the books (if $s=1$, delete any arbitrary edge).
		As each triangle of $G$ contains two distinct edges from $G'$, we have $t(G)=e(G')/2$.
		By Mantel's theorem, $e(G')\le \lfloor n^2/4\rfloor$, implying $t(G)\le \frac 12\lfloor n^2/4\rfloor$, i.e. $t(G)\le \lfloor n^2/8\rfloor$.
	\end{proof}

	%==========================================
	% Upper bounds
	%==========================================
	
	\section{Upper bounds}\label{sec:upperbd}
	
	In order to prove that $\ex(n,K_3,\widehat P_4)\le K$ for some fixed $n$ and $K$, we need to show that any $n$-vertex graph containing at least $K+1$ triangles contains a copy of $\widehat P_4$.

	\subsection{The cases $5\le n\le 8$: brute force}\label{subsec:brutalforce}
	
	While a case-by-case analysis is tractable by hand for $n=5$ for example, we quickly run into several possible configurations while trying to prove $\ex(8,K_3,\widehat P_4)=8$.
	This is where we turn to a computer-generated check.
	For example, to prove that all $8$-vertex graphs with more than $9$ triangles is $\widehat P_4$-free, we can assume that $012$ and $013$ are two triangles in some $8$-vertex graph $G$ containing $9$ triangles.
	Then triangles that have an edge from the set $\{02, 03, 12, 13\}$ and have a node from $\{4,5,6,7\}$ are excluded from $G$ since any of these patterns form a $\widehat P_4$.
	This excludes $16$ triangles.
	Hence the plausible triangles that $G$ may contain other than $012$ and $013$ are $\binom 83 - 18 = 38$ in number.
	We generate $\binom {38}{7}\approx 1.26\times 10^7$ possible graphs, filter out the ones that have exactly $9$ triangles, and check for $\widehat P_4$'s in each of them.
	
	Our program is available at the Github repository in \cite{Mukherjee_Exact_generalized_Tur_an_2023}.
	We run \verb*|triangle_count_parallel.py| for different pairs of $(n, t)$ to figure out both the extremal number and all extremal configurations for $n$-vertex graphs with $t$-triangles.
	The results are compiled in the notebook \verb*|triangle_count.ipynb|.
	Our computation shows that $\text{ex}(n,K_3,\widehat P_4)=4,5,8,8$ for $n=5,6,7,8$, respectively.
	The total computation time required for $(n,t)=(8,9)$ on 7 threads of an \verb|Intel(R) Core(TM) i7-8550U| laptop processor running at 1.80GHz was around 18 minutes.
	
	\subsection{The cases $9\le n \le 11$: identifying $K_4$}
	
	The main idea behind these cases is to follow the steps of the proof in \cite{suspensionFree2023}, Section 5.2.
	
	\begin{thm}
		\label{thm:9-11-theorem}
		Suppose $(n,t)\in\{(9,11), (10,13), (11, 16)\}$, and $G$ is an (edge-minimal) $n$-vertex graph with $t$ triangles.
		Then $G$ must contain a $\widehat P_4$.
	\end{thm}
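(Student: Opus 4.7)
The plan is to argue by contradiction: suppose $G$ is an edge-minimal $n$-vertex $\widehat P_4$-free graph with $t$ triangles, where $(n,t)\in\{(9,11),(10,13),(11,16)\}$. In each case $t=\lfloor n^2/8\rfloor+1$, so Lemma~\ref{lem:mustcontainK4} forces $G$ to contain a $K_4$; fix one such copy on the vertex set $V_K=\{a,b,c,d\}$. By Lemma~\ref{lem:block-characterization} this $K_4$ is an entire triangle block of $G$, so no edge inside $V_K$ lies in any further triangle; in particular, no vertex of $U:=V(G)\setminus V_K$ can be adjacent to two vertices of $V_K$, lest an edge of $V_K$ be triangle-connected to a fifth triangle.

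For each $v\in V_K$ set $N'_v:=N(v)\cap U$. The sets $\{N'_v\}_{v\in V_K}$ are pairwise disjoint, and the induced subgraph $G[N_G(v)]$ decomposes as the vertex- and edge-disjoint union of the triangle on $V_K\setminus\{v\}$ and the subgraph induced on $N'_v$. Since $G$ has no $\widehat P_4$ with apex $v$, the subgraph induced on $N'_v$ must be $P_4$-free as a subgraph; a short structural observation shows that any such graph is a disjoint union of triangles, stars, single edges, and isolated vertices, and in particular $e(N'_v)\le|N'_v|$. Classifying the triangles of $G$ by how they meet $V_K$ yields the identity
\[
t(G)\;=\;4\;+\;\sum_{v\in V_K}e(N'_v)\;+\;t(G[U]).
\]

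From here I would split on whether $G[U]$ itself contains a $K_4$. If it does not, Lemma~\ref{lem:mustcontainK4} bounds $t(G[U])\le\lfloor(n-4)^2/8\rfloor$; combined with $\sum_v e(N'_v)\le n-4$, the identity admits only a handful of near-extremal partitions of the $|N'_v|$'s that attain the required total. In each such configuration, the $P_4$-freeness of every $N'_v$ together with the absence of crossing edges in any neighborhood $N_G(\cdot)$ forces enough non-adjacencies inside $G[U]$ to rule out the number of triangles needed there. If $G[U]$ does contain a $K_4$, I would iterate the same structural analysis on that nested $K_4$-block, using the brute-force values $\ex(m,K_3,\widehat P_4)$ for $m\in\{5,6,7\}$ from Section~\ref{subsec:brutalforce} as innermost base cases; the compounded disjointness constraints from two (or more) $K_4$-blocks quickly collapse the triangle count strictly below $t$. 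The main obstacle will be the case analysis in the first scenario: the arithmetic is tight to within one or two triangles, so for every surviving partition of the $|N'_v|$'s one must combine (i) the $P_4$-freeness of each $N'_v$, (ii) the $\widehat P_4$-freeness at each vertex of $U$, and (iii) the block decomposition inside $G[U]$, to exhibit a $P_4$ in some neighborhood of $G$ and thereby produce the contradictory $\widehat P_4$.
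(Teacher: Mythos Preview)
Your setup is sound: the $K_4$ is its own block, the $N'_v$ are pairwise disjoint, the identity $t(G)=4+\sum_{v}e(N'_v)+t(G[U])$ holds, and Lemma~\ref{lem:mustcontainK4} bounds $t(G[U])$ when $G[U]$ is $K_4$-free. But the arithmetic you obtain is not a contradiction by itself (e.g.\ at $(9,11)$ it yields only $t(G)\le 4+5+\lfloor 25/8\rfloor=12$), and you yourself flag the residual case analysis on the partitions $(|N'_v|)_v$ as the ``main obstacle''. That analysis is never carried out, and it is not evident that your items (i)--(iii) dispose of every near-extremal partition without further structural input; the plan is therefore incomplete precisely where the real work lies.

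The paper sidesteps this by using edge-minimality more sharply than you do. If $|N'_v|=1$, the lone edge from $v$ into $U$ lies in no triangle (its other endpoint cannot be adjacent to any other vertex of $V_K$, and there is no second vertex in $N'_v$), contradicting edge-minimality; hence each $|N'_v|$ is either $0$ or at least $2$. Combined with $\sum_v|N'_v|\le n-4\in\{5,6,7\}$, this forces (after ordering) the smallest $|N'_v|$ to be $0$, and often more. The paper then simply \emph{deletes} those vertices of $V_K$ whose $|N'_v|$ is small, counts exactly how many triangles are lost, and lands on a smaller $\widehat P_4$-free graph that violates the already-computed values $\ex(6,K_3,\widehat P_4)=5$ or $\ex(8,K_3,\widehat P_4)=8$; the pair $(11,16)$ reduces in a single deletion to $(10,13)$. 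This short reduction replaces your open-ended case split entirely. The key idea you are missing is that edge-minimality rules out $|N'_v|=1$, which lets one remove vertices of $V_K$ rather than analyse $G[U]$.
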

	\begin{proof}
		For the sake of contradiction, assume that $G$ was $\widehat P_4$-free.
		If $G$ was also $K_4$-free, then by Lemma~\ref{lem:mustcontainK4}, $t(G)\le \lfloor n^2/8\rfloor = 10, 12, 15$ for $n=9,10,11$, contradicting our initial assumption on $t(G)$.
		
		Therefore $G$ must contain a $K_4$.
		Let this $K_4$ be induced by vertex subset $S=\{u_0,u_1,u_2,u_3\}\subset V(G)$.
		Define $X_i := N(u_i) - S$ for $0\le i \le 3$.
		As $G[S]$ is a triangle block, $X_i\cap X_j=\varnothing$ for every $i\neq j$.
		Further, $\sum_{i=0}^3|X_i|\le n-4$.
		Without loss of generality assume $|X_0|\le \cdots\le |X_3|$.
		Now we consider each case separately.
		\begin{itemize}
			\item {\bf Case 1. }${(n,t)=(9,11)}$:
			In this case, $\sum_{i=0}^3|X_i|\le 5$.
			If $|X_1|>0$, by edge-minimality we would have $|X_1|\ge 2$, implying $|X_1|+|X_2|+|X_3|\ge 6$, a contradiction.
			Thus, $|X_0| = |X_1| = 0$, and by a similar argument, $|X_2|\le 2$.
			This means the vertex $u_2$ lies in at most one triangle outside of $G[S]$.
			Let $G'$ be obtained by deleting $\{u_0,u_1,u_2\}$ from $G$. Clearly $n(G')=6$ and $t(G')\ge t(G)-5 = 6$.
			As $\ex(6,K_3,\widehat P_4)=5$ by the discussion in Section~\ref{subsec:brutalforce}, $G'$ has a $\widehat P_4$, a contradiction.
			
			\item {\bf Case 2. }$(n,t)=(10,13)$:
			Here, $\sum_{i=0}^3 |X_i|\le 6$.
			By a similar analysis as before, we can infer that $|X_0|=0$ and $|X_1|\le 2$.
			If $|X_1|=0$, we could consider $G'=G-\{u_0,u_1\}$, which would have $n(G')=8$ and $t(G')=13-4=9$, which would lead us to a $\widehat P_4$ since $\ex(8,K_3,\widehat P_4)=8$ by the calculation in Section~\ref{subsec:brutalforce}.
			Thus, we have $|X_0|=0$, $|X_1|=2$, and hence $|X_2|=|X_3|=2$.
			Now, if we consider $G''=G-S$, we have $n(G'')=6$ and $t(G'') = 13-4-3=6$, again implying that $G''$ has a $\widehat P_4$.
			
			\item {\bf Case 3. }$(n,t)=(11,16)$:
			For this pair of $(n,t)$, we have $\sum_{i=0}^3|X_i|\le 7$, implying $|X_0|=0$ again.
			Since $u_0$ lies in exactly three triangles of $G[S]$, $G'=G-\{u_0\}$ has $n(G')=10$ and $t(G')=13$, leading us to the previous case.
		\end{itemize}
	
	In either of the three cases, we obtain a contradiction, finishing the proof for these cases.
	\end{proof}
	
	\subsection{The case $n\ge 12$: identifying $K_4$}\label{sec:inductionStep}
	
	Now that we have proved $\ex(n,K_3,\widehat P_4) = \lfloor n^2/8\rfloor$ for $8\le n \le 11$, we are now ready to handle the general case using induction on $n$.
    Our proof follows the idea of \cite{hatP3Free-gerbner2022} with a more careful analysis to obtain the desired bound.
	
	\begin{proof}[Proof of Theorem~\ref{thm:mainthm} for $n\ge 12$.]
		Let us assume that $\ex(k,K_3,\widehat P_4)=\lfloor k^2/8\rfloor$ for all $8\le k \le n-1$.
        %Let $f(n)=1$ if $4\mid n$ and $0$ otherwise.
        We note that a simple case analysis leads to
        \begin{equation}
            \label{eq:induction-1-3-4-vertex}
            \begin{aligned}
            \lfloor n^2/8\rfloor - \lfloor (n-1)^2/8\rfloor &\ge \lfloor n/4\rfloor\\ %+ f(n-3),\\
            %\lfloor n^2/8\rfloor - \lfloor (n-3)^2/8\rfloor &=\lfloor 3n/4\rfloor + f(n-1)-1,\\
            \lfloor n^2/8\rfloor - \lfloor (n-4)^2/8\rfloor &= n-2.
            \end{aligned}
        \end{equation}
		For the sake of contradiction, suppose $G$ is an $n$-vertex $\widehat P_4$-free graph with $t(G)\ge \lfloor n^2/8\rfloor +1$.
        For a subset $U\subset V(G)$, let us denote by $t(U)$ the number of triangles containing at least one vertex from $U$.
        By (\ref{eq:induction-1-3-4-vertex}), we may assume that
        \begin{equation}
            \label{eq:assumption-1-3-4-vertex}
            \begin{aligned}
            |U|=1\implies t(U)&\ge \lfloor n/4\rfloor +1,\\ %+ f(n-3),\\
            %|U|=3\implies t(U)&\ge \lfloor 3n/4\rfloor + f(n-1),\\
            |U|=4\implies t(U)&\ge n-1.
            \end{aligned}
        \end{equation}
		
        Now, notice that by Lemma~\ref{lem:mustcontainK4}, $G$ must contain a $K_4$.
		As in the previous section, let $S=\{u_0,u_1,u_2,u_3\}$ induce this $K_4$, and denote $X_i=N(u_i)-S$ for $0\le i \le 3$. Again, $|X_i\cap X_j|=\varnothing$ for every $i\neq j$.
        Observe that $t(S)= \sum_{i=0}^3 e(X_i)+4$, and so by (\ref{eq:assumption-1-3-4-vertex}), 
		\[
		\sum_{i=0}^3 e(X_i)\ge n-5.
		\]
		
		On the other hand, since each $X_i$ is $P_4$-free, we have $\sum_{i=0}^3 e(X_i)\le \sum_{i=0}^3 |X_i| \le n-4$.
        Hence,
        \begin{equation}
            \label{eq:e(Xi)-limited}
            \sum_{i=0}^3 e(X_i)\in \{n-5, n-4\}
        \end{equation}
		This implies that $e(X_i)=|X_i|$ for at least {three} $u_i\in S$.
		Assume that $e(X_i)=|X_i|$ for $0\le i \le 2$ and $e(X_3)\in\{|X_3|-1, |X_3|\}$.
        This also means that $G[X_i]$ are vertex-disjoint unions of triangles for $0\le i\le 2$,
        and $X_3$ is a union of triangles and a star on $r$ vertices for some $r\ge 0$.
        %By (\ref{eq:assumption-1-3-4-vertex}) on $U=S-\{u_i\}$, we have
        %\begin{equation}
        %    \label{eq:sum-Xj-jneqi-big}
        %    \sum_{j\neq i} |X_j| + 4 \ge \sum_{j\neq i}e(X_j) + 4 \ge \lfloor 3n/4\rfloor+f(n-1),
        %\end{equation}
        %implying $|X_i|\le n-\lfloor 3n/4\rfloor -f(n-1) = \lfloor n/4\rfloor + f(n-2)$.
        Further, (\ref{eq:assumption-1-3-4-vertex}) gives us the bound
        \begin{equation}
            \label{eq:bound-Xi-lower}
            |X_i|\ge \lfloor n/4\rfloor -2 .%\le \lfloor n/4\rfloor + f(n-2).
        \end{equation}

        We now continue with a more detailed analysis of the neighborhoods of vertices in $G$.
        In what follows, let $x_i$ denote the size of $X_i$.
        For a subset $A\subset V(G)$, let $\mathcal T(A)$ denote the set of triangles in $G[A]$.
        We now consider two cases.
        
        \paragraph{Case 1: $\sum_{i=0}^4x_i=n-5$.}
        
        In this case, note that since $\sum_{i=0}^3 e(X_i) = n-5$, we have $e(X_3)=x_3$.
        Thus, the subgraphs $G[X_i]$ are all disjoint unions of triangles, and there is exactly one vertex $y$ in $V(G)-\bigcup_i X_i \cup S$,
        and thus $3\mid n-5$, implying $n\equiv 2 \mod 3$.
        Moreover, (\ref{eq:bound-Xi-lower}) implies $x_i\ge 3$, and hence $n\ge 17$.

        Now, observe that for $G'=G-\{y\}$,
        \begin{equation}
            \sum_{v\in V(G)}\deg v = \sum_{i=0}^3\sum_{vwz\in \mathcal T(X_i)}(\deg_{G'}v+\deg_{G'}w+\deg_{G'}z) + \sum_{v\in S}\deg v + 2\deg y.
            \label{eq:break-sum-over-V-with-y}
        \end{equation}

        We proceed by upper bounding each term of (\ref{eq:break-sum-over-V-with-y}) separately.

        \begin{itemize}
            \item Let $vwz\in\mathcal T(X_0)$.
            For any $j\neq 0$, as $N(v)-X_0-S-\{y\}$ cannot contain two adjacent vertices from the same $X_j$, $v$ can only be adjacent to at most one vertex from each triangle of $X_j$.
            Finally, $v$ is adjacent to exactly three nodes from $X_0\cup S$, leading to
            \[
            \deg_{G'} v + \deg_{G'} w + \deg_{G'} z \le 3\left(\frac{x_1}{3} + \frac{x_2}{3} + \frac{x_3}{3} \right) + 9 = (x_1+x_2+x_3)+9.
            \]
            By repeating the same argument over all $x_i/3$ triangles from $\mathcal T(X_i)$, we have
            \[
            \sum_{vwz\in\mathcal T(X_i)}(\deg_{G'}v+\deg_{G'}w+\deg_{G'}z) \le \frac{x_i}{3}\sum_{j\neq i}x_j + 3x_i.
            \]
            \item As $y$ is not adjacent to any vertex of $S$, we have 
            \[\sum_{v\in S}\deg v = (x_0+x_1+x_2+x_3) + 12 = n+7.\]
            \item For each $i$, $N(y)\cap X_i$ has at most $x_i/3$ vertices, as otherwise by the pigeonhole principle we would have $v,w\in N(y)\cap X_i$ that are adjacent, leading to a triangle $yvw$ sharing an edge with the $K_4$ containing $u_i$, $v$ and $w$.
            Further, $y$ does not have a neighbor in $S$.
            Thus, \[\deg y \le \frac{x_0+x_1+x_2+x_3}3 = \frac{n-5}{3}.\]
        \end{itemize}
        Putting these inequalities together and noting that $3t(G)\le \sum_{v\in V(G)}\deg v$, (\ref{eq:break-sum-over-V-with-y}) gives us
        \[
        \begin{aligned}
        3\lfloor n^2/8\rfloor + 3 \le 3t(G)&\le \frac 23\sum_{i<j}x_ix_j + 3(x_0+x_1+x_2+x_3)+(n+7) + \frac23(n-5)\\
        & = \frac 13(n-5)^2 - \frac13\sum_{i=0}^3x_i^2 + \frac{14n-34}{3}.
        \end{aligned}
        \]
        On the other hand, we note that by the Cauchy-Schwarz inequality, $\sum_{i=0}^3x_i^2 \ge \frac 14(n-5)^2$. 
        Therefore,
        \[
        3\lfloor n^2/8\rfloor + 3 \le \frac 14(n-5)^2+\frac{14n-34}{3} = \frac1{12}(3 n^2 + 26 n - 61),
        \]
        A contradiction to $n\ge 17$. This completes the proof in this case.
        \hfill{$\blacksquare$}

        \paragraph{Case 2: $\sum_{i=0}^4x_i=n-4$.}
        In this case, recall that $G[X_i]$ are disjoint unions of triangles for $0\le i\le 2$, and $X_3$ is a union of triangles and a star on $r\ge 0$ vertices.
        Let us denote this star as $S^\ast = \{c,\ell_1,\ldots, \ell_{r-1}\}$ where $c$ is the center and $\ell_j$ the leaves.
        
        We now continue with the exact same analysis of the neighborhoods of vertices in $G$ as in the previous case.
        For a subset $A\subset V(G)$, let $\mathcal T(A)$ denote the set of triangles in $G[A]$.
        First, we note that
        \begin{equation}
            \label{eq:break-sum-over-V}
            \sum_{v\in V(G)}\deg v = \sum_{i=0}^2\sum_{vwz\in \mathcal T(X_i)}(\deg v+\deg w+\deg z) + \sum_{v\in X_3}\deg v + \sum_{v\in S}\deg v.
        \end{equation}
        Let us now upper bound each term in (\ref{eq:break-sum-over-V}) separately.
        % For convenience of notation, denote by $x_i$ the size of $X_i$ for every $0\le i \le 3$.
        \begin{itemize}
            \item Let $vwz\in \mathcal T(X_0)$.
            Clearly $N(v)-X_0-S$ cannot contain two adjacent vertices from the same $X_j$, $j\neq 0$.
            Therefore, $v$ can only be adjacent with at most one vertex from each triangle of $X_j$ for $j\neq 0$.
            Moreover, $N(v)\cap S^\ast$, $N(w)\cap S^\ast$ and $N(z)\cap S^\ast$ are disjoint, implying 
            \[
            \deg v + \deg w + \deg z \le 3\left(\frac{x_1}{3} + \frac{x_2}{3} + \frac{x_3-r}{3}\right) + r + 9 = (x_1+x_2+x_3) + 9.
            \]
            Similar inequalities hold for each of the $x_i/3$ triangles in $\mathcal T(X_i)$, $0\le i\le 2$.
            In particular, we have
            \[
            \sum_{vwz\in \mathcal T(X_i)}(\deg v+\deg w+\deg z) \le \frac{x_i}3\sum_{j\neq i}x_j + 3x_i.
            \]
            \item Let $v\in X_3$.
            Then, $N(v)-X_3-S$ can have at most one vertex from each triangle of $X_i$.
            Thus,
            \[
            \deg v \le \left\{
            \begin{array}{cl}
            \frac13(x_0+x_1+x_2) + 3, & v\not\in S^\ast,\\
            \frac13(x_0+x_1+x_2) + r, & v = c,\\
            \frac13(x_0+x_1+x_2) + 2, & v \in S^\ast-\{c\}.
            \end{array}
            \right.
            \]
            Thus, if $r\ge 1$,
            \[
            \sum_{v\in X_3}\deg v \le \frac{x_3(x_0+x_1+x_2)}3 + 3(x_3-r) + r + 2(r-1) = \frac{x_3(x_0+x_1+x_2)}3 + 3x_3 - 2,
            \]
            and if $r=0$,
            \[
            \sum_{v\in X_3}\deg v \le \frac{x_3(x_0+x_1+x_2)}3 + 3x_3.
            \]
            We use the latter inequality as it holds for any value of $r$.
            \item Finally, we have
            \[
            \sum_{v\in S}\deg v = (x_0+x_1+x_2+x_3)+12 = n+8.
            \]
        \end{itemize}

        Therefore, (\ref{eq:break-sum-over-V}) along with $3t(G)\le \sum_{v\in V(G)}\deg v$, gives us
        \begin{align}
        3t(G) &\le \frac 23\sum_{i<j}x_ix_j + 3(x_0+x_1+x_2+x_3) + n + 8.\\
        & = \frac13(n-4)^2 - \frac13\sum_{i=0}^3 x_i^2 + 4n - 4 \label{eq:pre-cauchy-schwarz}
        \end{align}

        Observe that by Cauchy-Schwarz, $\sum_{i=0}^3x_i^2\ge \frac14(n-4)^2$.
        Hence, (\ref{eq:pre-cauchy-schwarz}) implies,
        \[
        3t(G)\le \frac14(n-4)^2 + 4n-4 \implies t(G)\le \frac1{12} n(n+8).
        \]
        By $t(G)\ge \lfloor n^2/8\rfloor + 1$, this implies $n\le 14$.
        Note that as $n-4 = \sum_{i=0}^3x_i \ge 9+x_3$, we would have $x_3\le 1$.
        By (\ref{eq:bound-Xi-lower}), this would mean $x_3 = 1$.
        However, this contradicts edge-minimality of $G$, as the edge between $u_3$ and the only vertex of $X_3$ would not be incident to any triangle in $G$, again leading to a contradiction in this case.
        \hfill{$\blacksquare$}

        This completes the proof of the induction step, implying $\ex(n,K_3,\widehat P_4)\le \lfloor n^2/8\rfloor$ for all $n\ge 12$.
        
	\end{proof}

	\section{Concluding Remarks: Uniqueness}
    \label{sec:concl}
    For $n\ge 8$, one may ask whether the lower bound construction of $K_{\lfloor n/2\rfloor, \lceil n/2\rceil}$ with a matching in any of the even parts is unique or not.
    In particular, our proof of Theorem~\ref{thm:mainthm} implies that if the extremal construction contained a $K_4$, then $\lfloor n^2/8\rfloor\le \frac1{12}n(n+8)$.
    This implies $n\le 16$, and indeed, setting $x_i=3$ for every $i$ leads us to an equality case in Case 2.
    
    Our proof therefore gives us the following construction from Figure~\ref{fig:16vertexfig} for $n=16$ consisting entirely of $K_4$-blocks: consider a $K_4$ given by $S=\{u_0,u_1,u_2,u_3\}$.
    For $0\le i\le 3$, let $N(u_i)-S$ consist of the triangles $b_io_ir_i$, where the $b_i$'s are colored blue, $o_i$'s olive and $r_i$'s red.
    Suppose the blue, red and olive vertices each form a $K_4$ (the diagonal edges are omitted in Figure~\ref{fig:16vertexfig} for clarity).
    Clearly each vertex neighborhood has $6$ edges, leading to a total of $16\cdot 6/3=32$ triangles, and hence this graph is a valid extremal configuration for $n=16$.

    \begin{figure}[ht]
    \centering
    \begin{tikzpicture}
        \tikzstyle{vertex}=[circle,fill=black,minimum size=2pt,inner sep=1.5pt]
        \tikzstyle{vertex0}=[circle,fill=red,minimum size=2pt,inner sep=1.5pt]
        \tikzstyle{vertex1}=[circle,fill=olive,minimum size=2pt,inner sep=1.5pt]
        \tikzstyle{vertex2}=[circle,fill=blue,minimum size=2pt,inner sep=1.5pt]
        
        \node[vertex] (u0) at (0,0){};
        \node[vertex] (u1) at (1,0){};
        \node[vertex] (u2) at (1,1){};
        \node[vertex] (u3) at (0,1){};
        \draw (u3)--(u0)--(u1)--(u2)--(u3)--(u1);
        \draw (u2)--(u0);

        \begin{scope}[rotate=180,shift={(-2,0)}]
        \node[vertex0] (u0) at (0,0){};
        \node[vertex] (u1) at (1,0){};
        \node[vertex2] (u2) at (1,1){};
        \node[vertex1] (u3) at (0,1){};
        \draw (u3)--(u0)--(u1)--(u2)--(u3)--(u1);
        \draw (u2)--(u0);
        \end{scope}

        \begin{scope}[rotate=180,shift={(-2,-2)}]
        \node[vertex1] (u0) at (0,0){};
        \node[vertex0] (u1) at (1,0){};
        \node[vertex] (u2) at (1,1){};
        \node[vertex2] (u3) at (0,1){};
        \draw (u3)--(u0)--(u1)--(u2)--(u3)--(u1);
        \draw (u2)--(u0);
        \end{scope}

        \begin{scope}[rotate=180,shift={(0,-2)}]
        \node[vertex2] (u0) at (0,0){};
        \node[vertex1] (u1) at (1,0){};
        \node[vertex0] (u2) at (1,1){};
        \node[vertex] (u3) at (0,1){};
        \draw (u3)--(u0)--(u1)--(u2)--(u3)--(u1);
        \draw (u2)--(u0);
        \end{scope}
        
        \begin{scope}[rotate=180]
        \node[vertex] (u0) at (0,0){};
        \node[vertex2] (u1) at (1,0){};
        \node[vertex1] (u2) at (1,1){};
        \node[vertex0] (u3) at (0,1){};
        \draw (u3)--(u0)--(u1)--(u2)--(u3)--(u1);
        \draw (u2)--(u0);
        \end{scope}

        \draw[line width = 0.3pt, dashed, color=red] (1,2)--(-1,1)--(0,-1)--(2,0)--(1,2);
        \draw[line width = 0.3pt, dashed, color=blue] (2,1)--(0,2)--(-1,0)--(1,-1)--(2,1);
        \draw[line width = 0.3pt, dashed, color=olive] (0.5, 0.5) circle (2.121320344);
    \end{tikzpicture}
    \caption{A $16$-vertex graph with $32$ triangles consisting of only $K_4$-blocks.\label{fig:16vertexfig}}
		\end{figure}

    It seems many extremal constructions are possible for smaller values of $n$ whenever divisibility and structural constraints are satisfied.
    For example, when $n=8$, we enumerate in our repository \cite{Mukherjee_Exact_generalized_Tur_an_2023}  all extremal constructions with $8$ triangles programmatically, and these constructions are comprised of either two edge-disjoint $K_4$'s, or only books.
    However, our proof of Theorem~\ref{thm:mainthm} provides uniqueness of the extremal configuration for $n\ge 17$.
    
	%==========================================
	% Acknowledgments
	%==========================================
	
	\section*{Acknowledgments}
	
	This work was supported by the Center of Innovations for Sustainable Quantum AI (JST Grant Number JPMJPF2221).
	
	%==========================================
	% Bibliography
	%==========================================
	
	\bibliographystyle{plain}
	\bibliography{hatP4Free.bbl}

\end{document}